\newtheorem{theorem}{Theorem}[section]
\newtheorem{claim}[theorem]{Claim}
\newtheorem{lemma}[theorem]{Lemma}
\newtheorem{example}[theorem]{Example}
\newtheorem{problem}[theorem]{Problem}
\newcommand{\eps}{\varepsilon}
\newcommand{\F}{\mathbb{F}}
\newcommand{\E}{\mathbb{E}}
\newcommand{\C}{\mathbb{C}}
\newcommand{\rank}{\textrm{rank}}
\newcommand{\arank}{\textrm{arank}}
\newcommand{\srank}{\textrm{srank}}
\newcommand{\prank}{\textrm{prank}}
\newcommand{\bias}{\textrm{bias}}
\newcommand{\e}[1]{\chi\left(#1\right)}
\newcommand{\x}{\textbf{x}}
\newcommand{\y}{\textbf{y}}
\newcommand{\bc}{\textbf{b}}
\begin{document}

\begin{frontmatter}[classification=text]

\title{The Analytic Rank of Tensors and Its Applications} 

\author[sl]{Shachar Lovett\thanks{Supported by NSF award 1614023.}}

\begin{abstract}
The analytic rank of a tensor, first defined by Gowers and Wolf in the context of higher-order Fourier analysis, is defined to be the logarithm of the bias of the tensor. We prove that it is a subadditive measure of rank: that is, the analytic rank of the sum of two tensors is at most the sum of their individual analytic ranks.

This analytic property turns out to have surprising applications: (i) common roots of tensors are always positively correlated; and (ii) the slice rank and partition rank, which were defined recently in the resolution of the cap-set problem in Ramsey theory, can be replaced by the analytic rank.
\end{abstract}
\end{frontmatter}

\section{Introduction}

The main objects of study in this paper are tensors, or equivalently multilinear forms. Tensors have intimate relations to central problems in computer science and combinatorics. The complexity of matrix multiplication
is captured by the rank of the matrix multiplication tensor, see for example the survey \cite{gs005}. In arithmetic complexity, proving super-linear lower bounds for tensors
is related to proving lower bounds for arithmetic circuits and formulas, see for example \cite{chillara2016chasm} and the citations within. More relevant to the topic
of the current paper, recently defined notions of ranks were instrumental in the resolution of the cap-set problem in additive combinatorics
\cite{croot2017progression,ellenberg2017large,tao2016symmetric}, which is in itself intimately related to the problem of matrix multiplication~\cite{alon2013sunflowers,blasiak2017cap}.

The standard notion of tensor rank, as well as the more recent notions (called slice rank and partition rank, which will be formally defined shortly) are inherently combinatorial notions of rank.
The focus on this paper in on an \emph{analytic} notion of rank, which was first defined by Gowers and Wolf \cite{gowers2011linear-a} in the context of higher-order Fourier
analysis. The purpose is to (i) explore the power of this new notion of rank and (ii) connect it to the more well studied combinatorial notions. Our main results
can be informally stated as follows.

\begin{theorem}[Main results, informal]
For any tensor, the analytic rank lower bounds all the previously known combinatorial notions of rank (standard one, slice rank and partition rank). Moreover,
it can replace the role of the slice rank or partition rank in applications in Ramsey theory in product spaces.
\end{theorem}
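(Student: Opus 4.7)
My plan is to deduce both halves of the statement from the subadditivity of the analytic rank, which is the main technical result of the paper: $\arank(T_1 + T_2) \le \arank(T_1) + \arank(T_2)$, where $\arank(T) = -\log_{|\F|}|\bias(T)|$. Since the standard inequalities $\prank(T) \le \srank(T) \le \rank(T)$ hold for every tensor, the first half reduces to showing $\arank(T) \le \prank(T)$, and the other two inequalities follow for free.

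For this reduction I would first verify that every partition-rank-one tensor $T(\x) = f(\x_S) \cdot g(\x_{[d] \setminus S})$ satisfies $\arank(T) \le 1$. Conditioning on the side of the partition that carries $g$, the bias splits into a contribution of $1$ from the event $\{g = 0\}$ and a contribution over the nonzero values of $g$, and averaging characters over these nonzero values collapses the second piece to a simple expression in $\Pr[f = 0]$. Using the elementary fact that a nonzero multilinear form over $\F$ vanishes with probability at least $1/|\F|$, one gets $|\bias(T)| \ge 1/|\F|$, hence $\arank(T) \le 1$. Subadditivity then yields $\arank(T) \le \prank(T)$ by decomposing $T$ into $\prank(T)$ partition-rank-one summands.

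For the second half I would reopen the slice- and partition-rank cap-set style arguments and isolate the single quantitative fact about rank that they use. Stripped to essentials: a tensor whose restriction to $A^d$ is diagonal forces a combinatorial rank of at least $|A|$, while a structural decomposition of the ambient tensor forces its combinatorial rank to be small, and comparing the two bounds controls $|A|$. To translate this into analytic rank, I would use (a) the upper bound $\arank \le \prank$ just established on the structural side, and (b) the positive-correlation-of-common-roots property from application (i) of the abstract on the diagonal side: diagonality on $A^d$ forces a rich common-zero pattern whose probability, via positive correlation, is incompatible with a small-analytic-rank decomposition. Balancing these two analytic bounds should reproduce the Ramsey conclusion purely analytically.

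The main obstacle is the diagonal lower bound in the second half. Subadditivity alone is useless there, because the analytic rank of a diagonal tensor is far smaller than its slice rank, so $\arank \le \srank$ cannot be reversed pointwise; the gap must be bridged by common-root correlation. Concretely, I need to quantify how $\bias$ behaves under restriction to a Cartesian power of a set and convert the qualitative positive-correlation statement into a numerical lower bound on $\arank$ with the correct rate in $|A|$. I expect the technical core of the proof to concentrate here, while the first half is essentially a one-line consequence of subadditivity together with the trivial rank-one estimate.
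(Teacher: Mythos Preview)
Your first half is essentially correct and, in fact, takes a more direct route than the paper for the partition-rank-one case. The paper proves $\bias(T_1(\x^A)T_2(\x^B)) \ge |\F|^{-1}$ by introducing random shifts $F_{a,b}(\x)=(T_1(\x^A)+a)(T_2(\x^B)+b)$, applying the bias lemma to get $|\bias(F_{a,b})|\le \bias(T)$, and then averaging over $a,b$. Your conditioning argument works too, but your description contains a slip: the second piece does \emph{not} collapse to an expression in $\Pr[f=0]$. What actually happens is that for each nonzero value $c$ of $g$, the inner average $\E_{\x_S}[\chi(cf(\x_S))]$ equals $\bias(f)$ (since the bias of a multilinear form is character-independent), giving $\bias(T)=\Pr[g=0]+\Pr[g\neq 0]\cdot\bias(f)\ge \Pr[g=0]\ge |\F|^{-1}$. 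So the fact you need is the vanishing probability of $g$, not of $f$; once corrected, the argument is cleaner than the paper's.

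The second half has a real gap. You correctly identify the two ingredients: the structural upper bound $\arank\le\prank$ and a lower bound $\arank(T)\ge c|A|$ whenever $A$ is an independent set. But your proposed tool for the lower bound---positive correlation of common roots---points in the wrong direction. That statement is a \emph{consequence} of subadditivity, not a mechanism for lower-bounding analytic rank; it gives inequalities of the form $\bias(\cdot)\ge \bias(\cdot)\bias(\cdot)$, which is an \emph{upper} bound on $\arank$ of a sum, not a lower bound on $\arank$ of a fixed tensor. The paper's route is different: it first proves that analytic rank is monotone under restriction to a coordinate subspace, i.e.\ $\arank(T|_U)\le \arank(T)$, and this is established not via subadditivity but via the same bias lemma that underlies subadditivity (expand $T(u+w)$ multilinearly, fix $w$, and bound the bias of the resulting sum of lower-order tensors by the bias of the top piece $T|_U$). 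Once you have restriction monotonicity, you restrict to $\F^A$, where $T$ becomes a diagonal tensor $\sum_{i\in A} c_i \prod_j x^j_i$ in disjoint variable blocks, and the bias factors exactly, giving $\arank(T|_{\F^A})=c|A|$ by a one-line computation. Your proposal recognizes that restriction must be controlled (``quantify how $\bias$ behaves under restriction''), but the plan to extract this from positive correlation will not work; the correct lemma lives one level deeper, at the bias-of-sums estimate.
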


The main reason why we find this theorem interesting is that the current techniques used to resolve the cap-set problem do not seem to extend to more general
problems of a similar flavour, except in very few cases. In general, the problems studied are Ramsey-type problems in product spaces, where the goal is
to upper bound the maximal size of a set without a particular sub-structure. We give two examples that illustrate this.

\begin{example}[$k$-AP free sets in $\mathbb{F}_p^n$]
\label{ex1}
Let $k \ge 3$, $p \ge k$ be a fixed prime, and $n$ be large. A $k$-AP (length $k$ arithmetic progression) in $\F_p^n$ is a set of the form $x,x+d,x+2d,\ldots,x+(k-1)d \in \F_p^n$ with $d \ne 0$.
The cap-set problem asks what is the maximal size of $A \subset \F_p^n$ which is $3$-AP free
(sometimes the term ``cap-set" is reserved for $p=3$, but we will abuse notation and extend it to general $p$).
Ellenberg and Gijswijt \cite{ellenberg2017large} proved that the answer is $O(c^n)$ for some $c<p$; that is,
$A$ has to be exponentially small. Previously, the best known bound was $O(p^n/n^{1+\eps})$ for some $\eps>0$ \cite{bateman2012new}.
However, these new techniques are seemingly unable to extend to bound the size of $4$-AP free sets, where the best known bound is
$O(p^n / n^{\eps})$ for some $\eps>0$ \cite{green2009new}.
\end{example}

\begin{example}[Erd\H{o}s-Szemer\'{e}di sunflower-free sets]
\label{ex2}
Consider families of subsets of $[n]=\{1,\ldots,n\}$. A $k$-sunflower is a set $x_1,\ldots,x_k \subset [n]$ where $x_i \cap x_j = x_1 \cap \ldots \cap x_k$ for all $i \ne j$.
Naslund and Sawin \cite{naslund2017upper} used techniques similar to these used to resolve the cap-set problem, to prove that the largest $3$-sunflower-free set $A \subset \{0,1\}^n$ has size $O(c^n)$
for some $c<2$. Again, $A$ is exponentially small. However, the techniques seem to fail to extend to bound the size of $4$-sunflower-free sets, where the best bounds are $2^{n-O(\sqrt{n})}$
(this bound follows from the Erd\"{o}s-Rado sunflower theorem \cite{erdos1960intersection} via standard reductions).
\end{example}

Thus, we see that while the new tensor-based techniques achieve amazing success on some problems, they are not robust in the sense that they do not generalize easily. One of the results of this paper is that the analytic rank can replace the role played by the slice rank or partition rank
in the current proofs, and in fact it is always a lower bound for these latter ranks. Thus, this raises an alternative approach to using tensor-rank based techniques to study these Ramsey problems.

\subsection{Tensors and tensor ranks}
We start by giving a formal definition of tensors and tensor ranks.

\paragraph{Tensors.}
Let $\F$ be a field, $V$ a finite dimensional linear space over $\F$, and $d \ge 1$. An order-$d$ tensor
(also called a $d$-linear form) is a multilinear map $T:V^d \to \F$. Equivalently, if $V$ is $n$-dimensional,
then we can identify $V \cong \F^n$, in which case
$$
T(x^1,\ldots,x^d) = \sum_{i_1,\ldots,i_d \in [n]} T_{i_1,\ldots,i_d} x^1_{i_1} \cdots x^{d}_{i_d}.
$$
Here we use the convention $[n]=\{1,\ldots,n\}$ and $x^i=(x^i_1,\ldots,x^i_n) \in \F^n$ for $i=1,\ldots,d$. The tensor $T$ is identified with the $d$-dimensional array
of its coefficients $(T_{i_1,\ldots,i_d}: i_1,\ldots,i_d \in [n])$.

\paragraph{Tensor ranks.}
There are several ``combinatorial" notions of tensor rank studied in the literature.
They all have the following form: the rank of $T$ is the minimal $r \ge 1$, such that $T$ can be factored as the sum of $r$ rank one tensors.
The only difference is what is considered to be a ``rank one tensor".

The most common definition, which is usually simply called ``rank", is that $T$ is rank one if it can be factored as
$$
T(x^1,\ldots,x^d) = T_1(x^1) T_2(x^2) \cdots T_d(x^d),
$$
where each $T_i$ is a an order-$1$ tensor (namely, a linear function). Recently, in the study of the cap-set problem and followup works,
two other definitions were introduced. A tensor $T$ has ``slice rank one" \cite{croot2017progression,ellenberg2017large,tao2016symmetric} if it can be factored as
$$
T(x^1,\ldots,x^d) = T_1(x^i) T_2(x^j: j \ne i)
$$
where $T_1$ is an order-$1$ tensor and $T_2$ is an order-$(d-1)$ tensor. A tensor $T$ has ``partition rank one" \cite{naslund2017partition} if it can be factored as
$$
T(x^1,\ldots,x^d) = T_1(x^i: i \in A) T_2(x^j: j \notin A)
$$
where $A \subset [d]$ is a set which satisfies $1 \le |A| \le d-1$, $T_1$ is an order-$|A|$ tensor, and $T_2$ is an order-$(d-|A|)$ tensor.

Let us denote the rank, slice rank, and partition rank of a tensor $T$ by $\rank(T), \srank(T), \prank(T)$, respectively. Then since rank one tensors are also slice rank one tensors,
and slice rank one tensors are also partition rank one tensors, we have:
$$
\prank(T) \le \srank(T) \le \rank(T).
$$

\subsection{The analytic rank}
A different notion of rank was introduced by Gowers and Wolf \cite{gowers2011linear-a} in the context of higher-order Fourier analysis.
Let $\F$ be a finite field, and let $\chi:\F \to \C$ be a nontrivial additive character (for example, if $\F=\F_p$ is a prime field,
we can take $\chi(x) = \exp(2 \pi i x / p)$). Given a function $F:X \to \F$, its \emph{bias} is
$$
\bias(T) := \E_{x \in X}[\e{F(x)}].
$$
In particular, let $T:V^d \to \F$ be an order-$d$ tensor. The bias of $T$ is always real and in $(0,1]$. To see that, define $T(\cdot,x^2,\ldots,x^d)$ to be the order-$1$ tensor on $x^1$ given a fixing of $x^2,\ldots,x^d$. Then
\begin{equation}\label{eq:bias}
\bias(T) = \E_{x^2,\ldots,x^d \in V} \left[ \E_{x^1 \in V} [\e{T(x^1,\ldots,x^d)}] \right] = \Pr_{x^2,\ldots,x^d \in V} \left[ T(\cdot,x^2,\ldots,x^d) \equiv 0 \right].
\end{equation}
This is since for an order-$1$ tensor (namely, a linear form), its bias is $1$ if it is identically zero, and is $0$ otherwise.
The \emph{analytic rank} of $T$ is defined to be
$$
\arank(T) := -\log_{|\F|} \bias(T).
$$
As $\bias(T) \in (0,1]$ we have that $\arank(T) \ge 0$. The following example might help shed some light on the definition. It shows
that in the case of order-$2$ tensors (namely, bilinear forms, corresponding to matrices), the analytic rank is equivalent to the standard notion of rank.

\begin{example}
Consider the order-$2$ tensor $T:(\F^n)^2 \to \F$ defined as $T(x,y)=\sum_{i=1}^r x_i y_i$. Then by \Cref{eq:bias},
$$
\bias(T) = \Pr_{y \in \F^n}[y_1=\ldots=y_r=0] = |\F|^{-r}.
$$
Hence $\arank(T) = r$ which coincides with the usual notion of rank for bilinear forms.
\end{example}

Gowers and Wolf \cite{gowers2011linear-a} proved that the analytic rank is approximately sub-additive,
in the sense that $\arank(T+S) \le 2^d \left( \arank(T)+\arank(S) \right)$. We show that the analytic rank is in fact sub-additive.
The fact that we do not lose any constant factor is crucial in the applications.

\begin{theorem}\label{thm:subadditive}
Let $T,S: V^d \to \F$ be order-$d$ tensors. Then
$$
\arank(T+S) \le \arank(T) + \arank(S).
$$
\end{theorem}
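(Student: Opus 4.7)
The plan is to induct on the order $d$. For the base case $d=1$, both $T$ and $S$ are linear forms whose bias is $1$ if they vanish identically and $0$ otherwise, and the claim reduces to a one-line case check. For $d\ge 2$, I would start from the standard peeling identity
$$\bias(T+S)=\E_{x^d\in V}\bigl[\bias(T_{x^d}+S_{x^d})\bigr],$$
where $T_{x^d}(x^1,\ldots,x^{d-1}):=T(x^1,\ldots,x^{d-1},x^d)$ is the order-$(d-1)$ tensor obtained by freezing the last coordinate. Applying the induction hypothesis pointwise in $x^d$ gives $\bias(T+S)\ge \E_{x^d}[\bias(T_{x^d})\,\bias(S_{x^d})]$, so the entire argument reduces to proving the ``positive correlation'' inequality $\E_{x^d}[\bias(T_{x^d})\,\bias(S_{x^d})]\ge \bias(T)\,\bias(S)$.

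This correlation step is the technical heart of the proof. The trick is to unfold each bias using the reasoning behind \Cref{eq:bias}, writing
$$\bias(T_{x^d})=\E_{y\in V^{d-2}}\bigl[\mathbf{1}[T(\cdot,y,x^d)\equiv 0\text{ as a linear form in }x^1]\bigr].$$
For each fixed $y$, the condition on $x^d$ is a system of linear equations, so $W_T(y):=\{x^d\in V:T(\cdot,y,x^d)\equiv 0\}$ is a genuine linear subspace of $V$; analogously one obtains subspaces $W_S(y')\subseteq V$ from $S$ averaged over an independent copy $y'$ of the auxiliary coordinates. The product $\bias(T_{x^d})\,\bias(S_{x^d})$ then factors as $\E_{y,y'}[\mathbf{1}[x^d\in W_T(y)\cap W_S(y')]]$, and after averaging over $x^d$ the desired inequality becomes
$$\E_{y,y'}\!\left[\frac{|W_T(y)\cap W_S(y')|}{|V|}\right]\;\ge\;\bias(T)\,\bias(S).$$
This follows from the elementary subgroup bound $|W_T(y)\cap W_S(y')|\cdot |V|\ge |W_T(y)|\cdot|W_S(y')|$ applied pointwise, combined with the identity $\E_y[|W_T(y)|/|V|]=\bias(T)$ (and its counterpart for $S$).

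The main obstacle is recognising that what initially looks like a delicate statement about how two analytic ranks covary under averaging can be reduced to the mundane fact that two subspaces of $V$ have intersection of size at least $|W_T||W_S|/|V|$. The multilinearity of $T$ and $S$ enters precisely through the observation that after freezing all but one coordinate, the vanishing condition on the remaining coordinate is linear; this is what exposes the subspace structure that makes the subgroup cardinality bound applicable after taking independent copies of the auxiliary coordinates.
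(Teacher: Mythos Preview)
Your argument is correct and takes a genuinely different route from the paper. The paper does \emph{not} induct on $d$: instead it writes $\bias(T)\bias(S)=\bias\bigl(T(\x)+S(\x+\y)\bigr)$, expands $S(\x+\y)$ multilinearly over subsets $I\subseteq[d]$, fixes $\y$ to an advantageous constant $\bc$, and then invokes a standalone lemma (\Cref{lemma:bias}) stating that adjoining lower-order multilinear terms $R_I(\x^I)$ for $I\subsetneq[d]$ to a top-order tensor $R_{[d]}$ can only shrink $|\bias|$; that lemma is proved by iterating a one-variable averaging estimate (\Cref{claim:w}) across the $d$ coordinates. Your proof replaces all of this machinery by peeling a single coordinate, reducing to the correlation inequality $\E_{x^d}[\bias(T_{x^d})\bias(S_{x^d})]\ge\bias(T)\bias(S)$, and dispatching that with the elementary subspace bound $|U\cap W|\ge |U|\,|W|/|V|$---an observation the paper never uses. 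What the paper's route buys is the reusable \Cref{lemma:bias}, which it applies again to show that analytic rank cannot increase under restriction to a subspace (\Cref{claim:restrict}) and that partition-rank-one tensors have analytic rank at most $1$; your argument is cleaner and more self-contained for \Cref{thm:subadditive} itself but does not produce that auxiliary tool along the way.
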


We note that the bound in \Cref{thm:subadditive} is tight: if $T,S$ are defined over disjoint variables, then it is easy to verify that $\bias(T+S)=\bias(T)\bias(S)$ and hence $\arank(T+S)=\arank(T)+\arank(S)$.

\subsection{Applications}
\Cref{thm:subadditive} has some surprising applications, which we describe next.

\paragraph{Common roots of tensors are positively correlated.}
We show that the common roots of order-$d$ tensors on a common input are always positively correlated.

\begin{claim}
\label{claim:pos_cor}
Let $T_1,\ldots,T_m,S_1,\ldots,S_n:V^d \to \F$ be order-$d$ tensors. Then
\begin{align}
\label{eq:pos_cor}
&\Pr_{\x \in V^d}[T_1(\x)=\ldots=T_m(\x)=S_1(\x)=\ldots=S_n(\x)=0] \ge \\
&\Pr_{\x \in V^d}[T_1(\x)=\ldots=T_m(\x)=0] \cdot \Pr_{\x \in V^d}[S_{1}(\x)=\ldots=S_n(\x)=0] \nonumber.
\end{align}
\end{claim}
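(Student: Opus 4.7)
The plan is to reduce the claim to a termwise application of the subadditivity theorem (Theorem~\ref{thm:subadditive}) via a standard Fourier expansion of indicator functions.

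First, I would use the orthogonality relation $\mathbb{1}[y = 0] = |\F|^{-1} \sum_{\alpha \in \F} \e{\alpha y}$ to write, for any collection of $\F$-valued functions on $V^d$,
$$\Pr_{\x}[T_1(\x) = \cdots = T_m(\x) = 0] = \frac{1}{|\F|^m} \sum_{\alpha \in \F^m} \bias\!\left(\sum_{i=1}^m \alpha_i T_i\right),$$
and analogously for the $S_j$'s and for the combined system. Since any linear combination of order-$d$ tensors is again an order-$d$ tensor, both sides of \Cref{eq:pos_cor} become averages (over the coefficient vectors) of bias values of genuine order-$d$ tensors. Concretely, the LHS becomes $|\F|^{-(m+n)} \sum_{\alpha, \beta} \bias(\alpha \cdot T + \beta \cdot S)$, and the RHS becomes $|\F|^{-(m+n)} \sum_{\alpha, \beta} \bias(\alpha \cdot T) \bias(\beta \cdot S)$, with $\alpha \cdot T := \sum_i \alpha_i T_i$ and $\beta \cdot S := \sum_j \beta_j S_j$.

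Next, I would apply \Cref{thm:subadditive} termwise: for each fixed $(\alpha, \beta)$,
$$\arank(\alpha \cdot T + \beta \cdot S) \le \arank(\alpha \cdot T) + \arank(\beta \cdot S),$$
which, after raising $|\F|$ to the negative of both sides, is exactly
$$\bias(\alpha \cdot T + \beta \cdot S) \ge \bias(\alpha \cdot T) \cdot \bias(\beta \cdot S).$$
Here I use the fact, already observed in the paper via \Cref{eq:bias}, that the bias of a tensor is a non-negative real number, so comparing bias values as reals is legitimate and the exponentiation step preserves the inequality. Averaging this pointwise inequality over $(\alpha, \beta) \in \F^m \times \F^n$ yields the desired bound.

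There is essentially no obstacle beyond setting up the Fourier expansion correctly; the content of the claim is entirely carried by subadditivity. The only minor points to verify are that (i) the bias of every order-$d$ tensor is a positive real, so termwise comparison is valid, and (ii) \Cref{thm:subadditive} indeed applies to arbitrary order-$d$ tensors — including the linear combinations $\alpha \cdot T$ and $\beta \cdot S$ — which it does, since these are order-$d$ tensors on the same $d$-tuple $(x^1, \ldots, x^d)$. Thus the correlation inequality is an immediate corollary of the subadditivity of analytic rank.
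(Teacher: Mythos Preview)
Your proof is correct. Both your argument and the paper's reduce the claim to \Cref{thm:subadditive}, but the packaging differs: the paper absorbs the coefficient vectors into a single extra coordinate, defining order-$(d{+}1)$ tensors
\[
T(x^0,\x)=\sum_{i=1}^m x^0_i\,T_i(\x),\qquad S(x^0,\x)=\sum_{j=1}^n x^0_{m+j}\,S_j(\x),
\]
so that by \Cref{eq:bias} the LHS of \Cref{eq:pos_cor} is exactly $\bias(T+S)$ and the RHS is $\bias(T)\bias(S)$; a single application of subadditivity then finishes. Your approach instead expands the extra coordinate via character orthogonality, obtaining an average over $(\alpha,\beta)\in\F^m\times\F^n$ and applying subadditivity termwise at order $d$. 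These are two views of the same computation---averaging over $x^0$ in the paper's bias is precisely your sum over $(\alpha,\beta)$---but the paper's version is more concise (one application of the theorem instead of $|\F|^{m+n}$), while yours has the mild advantage of never leaving order $d$.
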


\begin{proof}
Define two order-$(d+1)$ tensors as follows:
$$
T(x^0,x^1,\ldots,x^d) = \sum_{i=1}^m x^0_i T_i(x^1,\ldots,x^d), \qquad S(x^0,x^1,\ldots,x^d) = \sum_{i=m+1}^{m+n} x^0_i S_i(x^1,\ldots,x^d).
$$
\Cref{eq:bias} gives that the LHS of \Cref{eq:pos_cor} is equal to $\bias(T+S)$, whereas the RHS is equal to $\bias(T)\bias(S)$.
The claim then follows from \Cref{thm:subadditive}.
\end{proof}

\paragraph{The analytic rank can replace the partition rank.}
The motivation behind the introduction of the slice rank and the partition rank, was to study the cap-set problem, and more generally
Ramsey problems in product spaces. Works in this space include
\cite{croot2017progression,ellenberg2017large,naslund2017partition,blasiak2017cap,kleinberg2016nearly,kleinberg2016growth,naslund2017upper}.

In all these problems, a certain tensor $T:(\F^n)^d \to \F$ is defined which captures the problem structure. An \emph{independent set} in $T$ is a subset $A \subset [n]$
that satisfies
$$
\forall i_1,\ldots,i_d \in A: \qquad T_{i_1,\ldots,i_d} \ne 0 \quad \Leftrightarrow \quad i_1=\ldots=i_d.
$$
The goal is to upper bound the size of the largest
independent set in $T$. The proofs combine the following two properties:
\begin{enumerate}
\item[(i)] The slice rank, or partition rank, of the specific tensor $T$ studied is low. This is usually an ad-hoc argument, which relies
on the specific definition of $T$.
\item[(ii)] If $T$ contains an independent set $A$ then its partition rank (and hence also its slice rank) is at least $|A|$.
\end{enumerate}
This allows to upper bound the size of the maximal independent set in $T$. We show that the analytic rank can be used instead of the slice rank, or the partition rank, and that it gives comparable bounds. This raises the possibility of proving bounds on the analytic rank directly, which may circumvent some of the challenges
in extending the current line of work to other Ramsey problems (see \Cref{ex1} and \Cref{ex2} and the discussion that follows).

\begin{theorem}
\label{thm:properties}
Let $T:V^d \to \F$ be an order-$d$ tensor. Then
\begin{enumerate}
\item[(i)] $\arank(T) \le \prank(T)$.
\item[(ii)] If $T$ contains an independent set $A$ then $\arank(T) \ge c |A|$.
\end{enumerate}
Here $c=c(d,|\F|)$ satisfies that $c \ge 2^{-d}$ always and $c \ge 1-\frac{\log(d-1)}{\log |\F|}$ which is better for large $\F$.
\end{theorem}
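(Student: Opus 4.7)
The two parts are essentially independent.

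For \emph{part (i)}, sub-additivity (\Cref{thm:subadditive}) reduces the task to showing $\arank(T) \le 1$ when $T(\x) = f(x^i: i\in A)\cdot g(x^j: j\notin A)$ is partition-rank-one, with $f, g$ tensors of positive orders on disjoint blocks of variables. By \eqref{eq:bias}, the linear form $T(\cdot, x^2, \ldots, x^d)$ in $x^1$ factors as $f(\cdot, x^2, \ldots, x^{|A|})$ scaled by the scalar $g(\ldots)$, so it vanishes identically iff either $g(\ldots) = 0$ or $f(\cdot, \ldots)$ vanishes as a linear form. Since the two blocks of variables are independent,
$$\bias(T) = \Pr[g = 0] + (1 - \Pr[g = 0])\cdot \bias(f) \ge \Pr[g = 0].$$
Applying character orthogonality, $\Pr[g = 0] = |\F|^{-1}(1 + \sum_{c \ne 0}\bias(cg))$, together with the fact that biases of positive-order tensors are non-negative reals, gives $\Pr[g = 0] \ge |\F|^{-1}$ and hence $\arank(T) \le 1$.

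For \emph{part (ii)}, I would prove the bound with the explicit constant $c_d = -\log_{|\F|}\!\bigl(1-(1-1/|\F|)^{d-1}\bigr)$ by induction on $d$. Let $m = |A|$ and $W = \operatorname{span}\{e_i : i \in A\}$. The vanishing of $T(\cdot, x^2,\ldots,x^d)$ as a linear form in $x^1$ forces $T(e_i, x^2, \ldots, x^d) = 0$ for each $i \in A$, so $\bias(T) \le \Pr[T(e_i, x^2, \ldots, x^d) = 0 \ \forall i \in A]$. Expanding each indicator by $\mathbf{1}_{a=0} = |\F|^{-1}\sum_c \chi(ca)$ and using linearity of $T$ in its first argument,
$$\bias(T) \le |\F|^{-m}\sum_{v \in W}\bias(T_v), \qquad T_v(x^2,\ldots,x^d) := T(v, x^2, \ldots, x^d).$$

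The heart of the argument is the observation that, for every $v \in W$, the support $A_v := \{i \in A : v_i \ne 0\}$ is an independent set for the order-$(d-1)$ tensor $T_v$: for $(i_2,\ldots,i_d) \in A_v^{d-1}$ the coefficient $(T_v)_{i_2,\ldots,i_d} = \sum_{i_1 \in A} v_{i_1} T_{i_1, i_2, \ldots, i_d}$, and the independent-set condition on $T$ kills every term except $i_1 = i_2 = \cdots = i_d$, leaving $v_{i_2} T_{i_2,\ldots,i_2}$, which is nonzero iff $i_2 = \cdots = i_d \in A_v$. Substituting the inductive hypothesis $\bias(T_v) \le |\F|^{-c_{d-1}|A_v|}$ and grouping $v$ by $|A_v|=k$ collapses the sum to $(1 + (|\F|-1)|\F|^{-c_{d-1}})^m$, giving
$$\bias(T) \le \Bigl(\tfrac{1 + (|\F|-1)|\F|^{-c_{d-1}}}{|\F|}\Bigr)^m, \qquad c_d = 1 - \log_{|\F|}\!\bigl(1 + (|\F|-1)|\F|^{-c_{d-1}}\bigr),$$
with base $c_1 = \infty$ (any nonzero order-$1$ tensor has zero bias). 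This linear recursion unrolls to the claimed closed form. Bernoulli's inequality $(1-1/|\F|)^{d-1} \ge 1 - (d-1)/|\F|$ then yields $c_d \ge 1 - \log(d-1)/\log|\F|$, and a direct comparison of $(1-1/|\F|)^{d-1}$ with the Taylor expansion of $|\F|^{-2^{-d}}$ yields $c_d \ge 2^{-d}$.

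\textbf{Main obstacle.} Part (i) is short given sub-additivity. The conceptual core of (ii) is recognizing that peeling the first coordinate by character orthogonality produces order-$(d-1)$ tensors $T_v$ that naturally inherit an independent set of size $|\operatorname{supp}(v) \cap A|$; once this is in place, the rest (binomial sum, solving the recurrence, extracting the two stated lower bounds on $c_d$) is mechanical. The one fiddly point I expect is verifying $c_d \ge 2^{-d}$ uniformly in $|\F|$, which seems to require a short case split (e.g.\ the bound $(1-1/|\F|)^{d-1} \ge 2^{-(d-1)}$ suffices when $|\F|$ is small, while the Bernoulli bound already dominates $2^{-d}$ when $|\F|$ is large).
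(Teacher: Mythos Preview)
Your proposal is correct, and both parts take routes that differ from the paper's.

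For part (i), both you and the paper reduce via sub-additivity to the partition-rank-one case and then show $\bias(T)\ge |\F|^{-1}$. The paper does this by introducing the shifted products $F_{a,b}(\x)=(T_1(\x^A)+a)(T_2(\x^B)+b)$, invoking \Cref{lemma:bias} to get $|\bias(F_{a,b})|\le\bias(T)$, and averaging over $a,b$ to obtain $\E_{a,b}[\bias(F_{a,b})]=|\F|^{-1}$. Your argument instead reads off $\bias(T)=\Pr[g=0]+(1-\Pr[g=0])\bias(f)$ directly from \eqref{eq:bias} and the block independence, and then lower-bounds $\Pr[g=0]$ by $|\F|^{-1}$ using character orthogonality and nonnegativity of tensor biases. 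Your route is a touch more elementary in that it does not use \Cref{lemma:bias}.

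For part (ii) the contrast is sharper. The paper first proves a separate monotonicity statement, \Cref{claim:restrict}, that $\arank(T|_U)\le\arank(T)$ for any subspace $U$ (again via \Cref{lemma:bias}), then restricts to $U=\F^A$, where the tensor becomes diagonal and the bias is computed exactly as $(1-(1-1/|\F|)^{d-1})^{|A|}$ in one line. Your induction on $d$ sidesteps the restriction lemma entirely: the first inequality $\bias(T)\le |\F|^{-m}\sum_{v\in W}\bias(T_v)$ follows from \eqref{eq:bias}, character orthogonality, and multilinearity, and the key observation that $A_v=\{i\in A: v_i\ne 0\}$ is an independent set for the order-$(d-1)$ tensor $T_v$ is checked just as you describe. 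The affine recursion $\alpha_d=\tfrac{1}{|\F|}+\tfrac{|\F|-1}{|\F|}\alpha_{d-1}$ with $\alpha_1=0$ indeed solves to $\alpha_d=1-(1-1/|\F|)^{d-1}$, matching the paper's constant exactly. The paper's route is shorter and isolates \Cref{claim:restrict} as a reusable tool; your route is self-contained and avoids \Cref{lemma:bias} altogether. For the two lower bounds on $c_d$, the paper handles $c\ge 2^{-d}$ by observing via convexity that $c(d,|\F|)\ge c(d,2)=-\log_2(1-2^{-(d-1)})\ge 2^{-(d-1)}$, which is cleaner than the Taylor comparison you sketch and in fact gives the slightly better $2^{-(d-1)}$; the Bernoulli argument for the large-field bound is the same in both.
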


We make two remarks. First, claim (i) was proved independently by Kazhdan and Ziegler \cite[Lemma 2.2]{kazhdan2018approximate}. The reader can verify that the unspecified constant $C_{L,d}$ in their Lemma 2.2 equals $|k|^{-L}$, from which the claim follows. Next, in claim (ii),
the fact that we do not obtain $c=1$ is not important for the applications, as typically $d$ is a small constant (say $3$ or $4$) and the goal is to prove bounds on $|A|$ which are exponential in $n=\dim(V)$, which is assumed to be large.

\subsection{Is the analytic rank really better than the partition rank?}
Given \Cref{thm:properties}, a natural question arises: is the analytic rank ``better" than the partition rank? namely,
are there tensors $T$ where $\arank(T) \ll \prank(T)$? This question is intimately related to the line of work known
as ``bias implies low rank" in higher-order Fourier analysis \cite{green2009distribution,kaufman2008worst,haramaty2010structure,bhowmick2018list,bhowmick2015bias}. Re-interpreting these
results in the language of analytic rank vs partition rank, the known results until very recently were:
\begin{enumerate}
\item[(i)] If $T$ is an order-$d$ tensor with $\arank(T) \le r$, then $\prank(T) \le f(r,d)$, where $f$ has an Ackerman-type dependence on its parameters \cite{bhowmick2015bias}. Note that $f$ does not depend on the underlying field $\F$ or the dimension of the tensor $n$.
\item[(ii)] For $d=3,4$, better quantitative bounds are known: $f(r,3) = O(r^4)$ and $f(r,4) = \exp(O(r))$ \cite{haramaty2010structure}.
\item[(iii)] Very recently, in ground-breaking independent works, Janzer \cite{janzer2019polynomial} and Mili{\'c}evi{\'c} \cite{milicevic2019polynomial} improved the bounds to polynomial for all $d$. Namely,
$f(r,d) = c_d r^{c_d}$ where $c_d$ is a constant which depends only on $d$.
\end{enumerate}

However, there are no examples known where the gap between the analytic rank and partition rank is more than a constant.
The best separation we know of is for the \emph{identity tensor}.

\begin{example}[Identity tensor]
Let $I:(\F_p^n)^d \to \F_p$ be the identity tensor, defined as
$$
I(x^1,\ldots,x^d) = \sum_{i=1}^n \prod_{j=1}^d x^j_i.
$$
Naslund \cite{naslund2017partition} proved that
$I$ has maximal partition rank, namely $\prank(I)=n$. On the other hand, the calculation in the proof of \Cref{thm:properties} shows that
$\arank(I) = cn$ where $c=c(d,p)$ is the constant given in \Cref{thm:properties}.
\end{example}

We refer the reader also to \cite{bhrushundi2018multilinear}, which
analyzes the relation between the analytic rank and rank of tensors,
in the context of proving arithmetic circuit lower bounds.
This leads to the following natural question.

\begin{problem}
Is it true that for any order-$d$ tensor $T$ it holds that $\prank(T) \le c_d \arank(T)$, where $c_d$ is a constant which depends
only on $d$?
\end{problem}

We conclude with another interesting problem, relating to the scope of definition of the analytic rank.

\begin{problem}
Can the notion of analytic rank be extended beyond finite fields? For example, for tensors defined over $\mathbb{R}$ or $\mathbb{C}$?
\end{problem}

We note that Gowers and Wolf in \cite{gowers2011linear-b} also defined analytic rank for functions over $\mathbb{Z}_N$, but the treatment there does not seem related to the problems studied in this paper.

\paragraph{Organization.} \Cref{thm:subadditive} is proved in \Cref{sec:subadditive}, and \Cref{thm:properties} is proved in \Cref{sec:properties}.

\section{Proof of \Cref{thm:subadditive}}
\label{sec:subadditive}
We prove \Cref{thm:subadditive} in this section.
It suffices to prove that for any two order-$d$ tensors $T,S:V^d \to \F$ it holds that
\begin{equation}
\label{eq:bias_T_plus_S}
\bias(T+S) \ge \bias(T) \bias(S).
\end{equation}
We first introduce some notation. Let $\x=(x^1,\ldots,x^d), \y=(y^1,\ldots,y^d) \in V^d$.
For $I \subseteq [d]$ define $I^c = [d] \setminus I$ and shorthand $\x^I := (x^i: i \in I)$.
Define $T_{I}(\x,\y)$ as
$$
T_{I}(\x,\y) := T_{I}(\x^I, \y^{I^c}) = T(z^1,\ldots,z^d), \text{ where }
z^i =
\begin{cases}
x^i & \text{if } i \in I\\
y^i & \text{if } i \notin I
\end{cases}\;.
$$
That is, $T_I$ is the tensor $T$ evaluated over $\x^I, \y^{I^c}$.
Observe that $T(\x+\y)$ decomposes as the sum
\begin{equation}
\label{eq:T_sum}
T(\x+\y) = \sum_{I \subseteq [d]} T_I(\x^I, \y^{I^c}).
\end{equation}
Express $\bias(T) \bias(S)$ as
$$
\bias(T) \bias(S) = \bias \left( T(\x) + S(\y) \right) = \bias \left( T(\x) + S(\x+\y) \right).
$$
Here, we used the fact that the joint distributions of $(\x,\y)$ and $(\x,\x+\y)$ are identical. Next, decompose $S(\x+\y)$ using \Cref{eq:T_sum} as
\begin{align*}
\bias(T) \bias(S)
&= \bias \left( T(\x) + \sum_{I \subseteq [d]} S_I(\x^I, \y^{I^c}) \right)\\
&= \bias \left( (T+S)(\x) + \sum_{I \subsetneq [d]} S_I(\x^I, \y^{I^c}) \right).
\end{align*}
Fix a choice of $\y=\bc \in V^d$ so that
$$
\bias(T) \bias(S) \leq
\left|\bias \left( (T+S)(\x) + \sum_{I \subsetneq [d]} S_I(\x^I, \bc^{I^c}) \right)\right|.
$$
Observe that $S_{I}(\x^I, \bc^{I^c})$ is an order-$|I|$ tensor of the inputs $\x^I$.
The proof of \Cref{thm:subadditive} follows from the following lemma,
applied to $R_{[d]}(\x)=(T+S)(\x)$ and $R_I(\x^I) = S_I(\x^I,\bc^{I^c})$ for $I \subsetneq [d]$.

\begin{lemma}
\label{lemma:bias}
For each $I \subseteq [d]$, let $R_I:V^I \to \F$ be an order-$|I|$ tensor. Consider the function
$$
R(\x) = \sum_{I \subseteq [d]} R_I(\x^I).
$$
Then
$$
|\bias(R)| \le \bias(R_{[d]}).
$$
\end{lemma}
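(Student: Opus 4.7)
The plan is to prove the lemma by induction on $d$. The base case $d=1$ is immediate: $R(x) = R_{\{1\}}(x) + R_\emptyset$ gives $|\bias(R)| = \bias(R_{\{1\}})$.

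For the inductive step, I would first integrate out the variable $x^d$. Splitting $R(\x) = \ell_{\x^{[d-1]}}(x^d) + B(\x^{[d-1]})$ according to whether a subset $I$ contains $d$ (so $\ell$ collects the $R_I$ with $d \in I$, which are linear in $x^d$ with no constant, and $B$ collects those with $d \notin I$), the inner expectation in $x^d$ equals $\mathbf{1}[\ell_{\x^{[d-1]}} \equiv 0]$. Hence
$$
|\bias(R)| \le \Pr_{\x^{[d-1]}}[\ell_{\x^{[d-1]}} \equiv 0] = \Pr_{\x^{[d-1]}}[L(\x^{[d-1]}) = 0],
$$
where $L(\x^{[d-1]}) \in \F^n$ is the coefficient vector of $\ell_{\x^{[d-1]}}(x^d)$ in $x^d$. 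This $L$ decomposes as $L = \sum_{J \subseteq [d-1]} L_J$, where $L_J(\x^J)$ is the coefficient vector (in $x^d$) of $R_{J \cup \{d\}}(\x^J, x^d)$, an $\F^n$-valued order-$|J|$ tensor. The top term $L_{[d-1]}$ encodes $R_{[d]}$, and by \Cref{eq:bias} one has $\Pr[L_{[d-1]} = 0] = \bias(R_{[d]})$.

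The crux is to establish the vector-valued inequality $\Pr[L = 0] \le \Pr[L_{[d-1]} = 0]$. I would use Fourier inversion over $\F^n$: $\mathbf{1}[L(\x^{[d-1]}) = 0] = \E_{w \in \F^n} \chi(\langle w, L(\x^{[d-1]})\rangle)$, so
$$
\Pr[L = 0] = \E_{w} \bias(\langle w, L\rangle).
$$
For each fixed $w$, the scalar function $\langle w, L\rangle(\x^{[d-1]}) = \sum_{J \subseteq [d-1]} \langle w, L_J\rangle(\x^J)$ fits the hypothesis of the lemma in $d-1$ variables, with top term $\langle w, L_{[d-1]}\rangle$. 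By the inductive hypothesis, $|\bias(\langle w, L\rangle)| \le \bias(\langle w, L_{[d-1]}\rangle)$. Pushing the absolute value through the $w$-average,
$$
\Pr[L = 0] \le \E_{w} \bias(\langle w, L_{[d-1]}\rangle) = \Pr[L_{[d-1]} = 0] = \bias(R_{[d]}),
$$
which combines with the earlier inequality to close the induction.

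The main obstacle — and the reason this should refine the Gowers--Wolf $2^d$ bound — is to avoid losing a factor when peeling off one variable. A direct Cauchy--Schwarz on $x^d$ would square the bias at each step and accumulate a $2^d$ factor over $d$ applications. The Fourier-lift step is what avoids this: it turns the vector-valued probability $\Pr[L = 0]$ into an average of scalar biases, so the inductive hypothesis applies pointwise in $w$ with no slack.
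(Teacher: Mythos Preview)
Your proof is correct. The paper's argument rests on the same core observation but packages it more directly: it isolates a small claim (for $B(x,y)=\sum_i x_i W_i(y)+W_0(y)$ one has $|\bias(B)|\le \Pr_y[W_1(y)=\cdots=W_n(y)=0]$) and then simply applies it $d$ times, once for each coordinate $i\in[d]$, each application stripping away all $R_I$ with $i\notin I$ while keeping all $d$ variables in play. Your route instead integrates out $x^d$ entirely, lands on a vector-valued $(d-1)$-variable problem, and uses Fourier inversion $1_{L=0}=\E_w\chi(\langle w,L\rangle)$ to recover a scalar statement to which the inductive hypothesis applies. The two are equivalent on the nose: your Fourier variable $w$ is just $x^d$ reborn, since $\langle w,L(\x^{[d-1]})\rangle=\ell_{\x^{[d-1]}}(w)$ and hence $\E_w\bias(\langle w,L\rangle)=\bias\bigl(\sum_{I\ni d}R_I\bigr)$, which is exactly the paper's next iterate. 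The paper's version is a bit leaner (no induction, no vector-valued detour), while yours makes the recursive structure explicit; neither approach loses any constant, which is the point.
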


In order to prove \Cref{lemma:bias}, we first prove the following claim.

\begin{claim}
\label{claim:w}
Let $W_0,\ldots,W_n:\F^m \to \F$ be functions. Consider functions $A,B:\F^n \times \F^m \to \F$ defined as follows:
$$
A(x,y)=\sum_{i=1}^n x_i W_i(y), \qquad B(x,y)=A(x,y) + W_0(y).
$$
Then
$$
|\bias(B)| \le \bias(A).
$$
\end{claim}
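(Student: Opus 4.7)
My plan is to exploit the linearity of $A$ in the $x$-variables to evaluate the inner expectation over $x$ exactly, and then apply the triangle inequality on the outer expectation over $y$.

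First, I would fix $y$ and consider $A(x,y) = \sum_{i=1}^n x_i W_i(y)$ as a linear form in $x$. The standard character-sum computation gives
$$
\E_{x \in \F^n}[\chi(A(x,y))] = \prod_{i=1}^n \E_{x_i \in \F}[\chi(x_i W_i(y))] = \mathbb{1}[W_1(y) = \cdots = W_n(y) = 0].
$$
Averaging over $y$ therefore yields $\bias(A) = \Pr_{y}[W_1(y) = \cdots = W_n(y) = 0]$, which in particular shows $\bias(A) \in [0,1]$ is a real number equal to a probability.

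Next, I would compute $\bias(B)$ the same way. Since $W_0(y)$ is independent of $x$, we can pull $\chi(W_0(y))$ out of the inner average:
$$
\E_{x}[\chi(B(x,y))] = \chi(W_0(y)) \cdot \E_{x}[\chi(A(x,y))] = \chi(W_0(y)) \cdot \mathbb{1}[W_1(y) = \cdots = W_n(y) = 0].
$$
Averaging over $y$ gives
$$
\bias(B) = \E_{y}\bigl[ \chi(W_0(y)) \cdot \mathbb{1}[W_1(y) = \cdots = W_n(y) = 0] \bigr].
$$

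Finally, I would take absolute values and apply the triangle inequality, using $|\chi(\cdot)| = 1$:
$$
|\bias(B)| \le \E_{y}\bigl[ \mathbb{1}[W_1(y) = \cdots = W_n(y) = 0] \bigr] = \Pr_{y}[W_1(y) = \cdots = W_n(y) = 0] = \bias(A),
$$
which is the desired inequality. There is no real obstacle here; the claim is essentially an exercise in orthogonality of characters of linear forms combined with the triangle inequality, and the work in the paper is really in setting up Lemma~\ref{lemma:bias} and Theorem~\ref{thm:subadditive} so that this simple computation can be invoked.
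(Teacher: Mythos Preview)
Your proof is correct and follows essentially the same approach as the paper: compute the inner expectation over $x$ via orthogonality of characters to obtain the indicator $1_{W_1(y)=\ldots=W_n(y)=0}$, then apply the triangle inequality using $|\chi(\cdot)|=1$. The paper presents the same argument more tersely, omitting the explicit factorization over coordinates and the separate computation of $\bias(A)$.
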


\begin{proof}
We have
$$
\bias(B) = \E_{x,y} \left[ \e{B(x,y)} \right] = \E_{y} \left[1_{W_1(y)=\ldots=W_n(y)=0} \cdot \e{W_0(y)}\right].
$$
Hence
$$
|\bias(B)| \le \E_{y} \left[1_{W_1(y)=\ldots=W_n(y)=0}\right] = \bias(A).
$$
\end{proof}

\begin{proof}[Proof of \Cref{lemma:bias}]
Fix $i \in [d]$ and decompose $R(\x)$ as
$$
R(\x) = \sum_{I \subseteq [d], i \in I} R_I(\x^I) + \sum_{I \subseteq [d], i \notin I} R_I(\x^I).
$$
Setting $x=x^i$ and $y=\x^{[d] \setminus \{i\}}$, the first sum has the form $\sum x_i W_i(y)$, and the second sum which
does not depend on $x$ is $W_0(y)$. \Cref{claim:w} then gives that
$$
|\bias(R)| \le \bias \left(\sum_{I \subseteq [d], i \in I} R_I(\x^I) \right).
$$
Applying this iteratively for $i=1,\ldots,d$ completes the proof.
\end{proof}

\section{Proof of \Cref{thm:properties}}
\label{sec:properties}
We prove \Cref{thm:properties} in this section. We break it as a series of claims.
We first show that the analytic rank is at most the partition rank.

\begin{claim}
Let $T:V^d \to \F$ be an order-$d$ tensor. Then $\arank(T) \le \prank(T)$.
\end{claim}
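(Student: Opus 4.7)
The plan is to combine subadditivity (\Cref{thm:subadditive}) with a direct calculation that every partition-rank-one tensor has analytic rank at most $1$. Once this is established, if $r = \prank(T)$ then writing $T = \sum_{i=1}^{r} T^{(i)}$ as a sum of $r$ partition-rank-one tensors and applying subadditivity $r-1$ times gives
$$
\arank(T) \le \sum_{i=1}^{r} \arank(T^{(i)}) \le r = \prank(T).
$$

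So the real content is to show $\arank(T) \le 1$ when $T$ has partition rank one. Write $T(\x) = T_1(\x^A) T_2(\x^{A^c})$ with $1 \le |A| \le d-1$. If $T \equiv 0$ the claim is trivial, so assume $T_1, T_2 \not\equiv 0$. Pick any index $i \in A$; by the symmetric version of \Cref{eq:bias} applied to this coordinate,
$$
\bias(T) = \Pr_{\x \setminus x^i}\bigl[T(\ldots,\cdot,\ldots) \equiv 0 \text{ as a linear form in } x^i\bigr].
$$
The linear form in $x^i$ factors as $T_1(\ldots, \cdot, \ldots) \cdot T_2(\x^{A^c})$, so it vanishes identically in $x^i$ whenever the scalar $T_2(\x^{A^c})$ is zero. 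Hence
$$
\bias(T) \ge \Pr_{\x^{A^c}}\bigl[T_2(\x^{A^c}) = 0\bigr].
$$

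The step I expect to require the most care is the lower bound $\Pr[T_2 = 0] \ge 1/|\F|$ for a nonzero multilinear form $T_2$ of order $\ge 1$. I would argue this by picking a coordinate $j$ on which $T_2$ genuinely depends and conditioning on the remaining variables: the restriction is a linear form in $x^j$, which is either identically zero (in which case $T_2$ vanishes with probability $1$ on this fiber) or nonzero (in which case it vanishes on exactly a $1/|\F|$ fraction of $x^j$). Averaging gives $\Pr[T_2 = 0] = p + (1-p)/|\F| \ge 1/|\F|$ where $p$ is the probability that the restricted linear form is identically zero. Combined with the previous display this yields $\bias(T) \ge 1/|\F|$, i.e.\ $\arank(T) \le 1$, completing the reduction.

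No other obstacles are anticipated; the argument is a clean application of \Cref{thm:subadditive} plus the elementary fiberwise zero-probability estimate for multilinear forms.
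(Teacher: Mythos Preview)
Your proof is correct, and the reduction via \Cref{thm:subadditive} to the partition-rank-one case is exactly how the paper begins. The treatment of that base case, however, differs.

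The paper proves $\bias(T) \ge |\F|^{-1}$ for $T(\x)=T_1(\x^A)T_2(\x^B)$ by introducing the auxiliary functions $F_{a,b}(\x)=(T_1(\x^A)+a)(T_2(\x^B)+b)$, observing that their top-degree multilinear part is $T$ so that \Cref{lemma:bias} gives $|\bias(F_{a,b})| \le \bias(T)$, and then averaging over $a,b$ to compute $\E_{a,b}\bias(F_{a,b})=\E_{a,b}\chi(ab)=|\F|^{-1}$. Your route is more direct: you invoke only the probabilistic identity \Cref{eq:bias} in a coordinate $i\in A$, note that the resulting linear form in $x^i$ vanishes whenever the scalar factor $T_2(\x^{A^c})$ does, and finish with the elementary fiberwise bound $\Pr[T_2=0]\ge 1/|\F|$ for a nonzero multilinear form of positive order. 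Your argument thus avoids \Cref{lemma:bias} entirely and is self-contained from \Cref{eq:bias}; the paper's argument, by contrast, reuses the machinery already built for \Cref{thm:subadditive} and illustrates the reach of \Cref{lemma:bias}. Both yield the same constant, so nothing quantitative is gained or lost either way.
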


\begin{proof}
Given \Cref{thm:subadditive}, it suffices to prove the claim for tensors $T$ of partition rank one.
Assume that $T:V^d \to \F$ factors as
$$
T(\x) = T_1(\x^A) T_2(\x^B)
$$
where $A \cup B$ is a partition of $[d]$, $|A|,|B| \ge 1$.
We will show that $\bias(T) \ge |\F|^{-1}$ which implies that $\arank(T) \le 1$.
For $a,b \in \F$ define the function
$$
F_{a,b}(\x) := \left( T_1(\x^A) + a \right) \left( T_2(\x^B) + b \right).
$$
\Cref{lemma:bias} gives that
$$
|\bias(F_{a,b})| \le \bias(T).
$$
On the other hand, if we let $a,b \in \F$ be chosen uniformly, then
\begin{align*}
\E_{a,b \in \F}[\bias(F_{a,b})] &= \E_{a,b \in \F, \x \in V^d} \left[ \e{(T_1(\x^A)+a)(T_2(\x^B)+b)} \right] \\
&= \E_{a,b \in \F} \left[ \e{ab} \right] = \Pr_{b \in \F}[b=0] = |\F|^{-1}.
\end{align*}
It follows that $\bias(T) \ge |\F|^{-1}$, as claimed.
\end{proof}

Next, we show that the analytic rank cannot increase in a restriction of a tensor to a subspace.

\begin{claim}
\label{claim:restrict}
Let $T:V^d \to \F$ be an order-$d$ tensor, let $U \subset V$ be a subspace and consider the restricted tensor $T|_U: U^d \to \F$. Then $\arank(T|_U) \le \arank(T)$.
\end{claim}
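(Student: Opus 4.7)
The goal is to show $\bias(T|_U) \ge \bias(T)$, which upon taking $-\log_{|\F|}$ yields the desired inequality. My plan is to mimic the proof of \Cref{thm:subadditive}: pick a linear complement $W$ with $V = U \oplus W$, expand by multilinearity, and apply \Cref{lemma:bias} to the pieces.

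Concretely, I would write $x^i = u^i + w^i$ with $u^i \in U$ and $w^i \in W$, so that
$$
\bias(T) = \E_{\mathbf{w} \in W^d}\left[\E_{\mathbf{u} \in U^d}\left[\e{T(u^1+w^1,\ldots,u^d+w^d)}\right]\right].
$$
By multilinearity, exactly as in \Cref{eq:T_sum},
$$
T(u^1+w^1,\ldots,u^d+w^d) = \sum_{I \subseteq [d]} T_I(\mathbf{u}^I, \mathbf{w}^{I^c}),
$$
and note that the $I=[d]$ term is precisely $T|_U(\mathbf{u})$, independent of $\mathbf{w}$.

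Next, fix $\mathbf{w} = \mathbf{b} \in W^d$ and define $R_I(\mathbf{u}^I) := T_I(\mathbf{u}^I, \mathbf{b}^{I^c})$, an order-$|I|$ tensor on $U^I$, with $R_{[d]} = T|_U$. By \Cref{lemma:bias} applied in the vector space $U$,
$$
\left|\E_{\mathbf{u} \in U^d}\left[\e{\textstyle\sum_{I \subseteq [d]} R_I(\mathbf{u}^I)}\right]\right| \le \bias(R_{[d]}) = \bias(T|_U).
$$
Since $\bias(T)$ is real and non-negative (by \Cref{eq:bias}), applying the triangle inequality under the outer expectation over $\mathbf{w}$ yields $\bias(T) \le \bias(T|_U)$, as needed.

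The main step I would highlight is that it is the \emph{same} lemma (\Cref{lemma:bias}) used to prove sub-additivity that drives this, applied to the expansion of $T(\mathbf{u}+\mathbf{w})$ rather than to $(T+S)(\mathbf{x}+\mathbf{y})$. There is no genuine obstacle; one only needs to be careful that \Cref{lemma:bias} is stated for an arbitrary finite-dimensional vector space and so applies verbatim with $U$ in place of $V$, and that the single ``top'' term $R_{[d]}$ happens to equal $T|_U$ because it involves no $w$-coordinates.
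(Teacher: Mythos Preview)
Your proposal is correct and essentially identical to the paper's own proof: both choose a complement $W$ with $V=U\oplus W$, expand $T(u^1+w^1,\ldots,u^d+w^d)=\sum_{I\subseteq[d]}T_I(\mathbf{u}^I,\mathbf{w}^{I^c})$ by multilinearity, fix $\mathbf{w}$ and apply \Cref{lemma:bias} over $U$ (noting that the top term is $T|_U$), and then average over $\mathbf{w}$. Your added remarks that $\bias(T)\ge 0$ justifies the final triangle-inequality step and that \Cref{lemma:bias} applies verbatim with $U$ in place of $V$ are exactly the right sanity checks.
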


\begin{proof}
Let $W \subset V$ be a subspace so that $U \oplus W = V$. Each $x \in V$ can be written uniquely
as $x=u+w$ with $u \in U, w \in W$. We have
$$
\bias(T) = \E_{x^1,\ldots,x^d \in V} \left[ \e{T(x^1,\ldots,x^d)} \right]
= \E_{u^1,\ldots,u^d \in U, w^1,\ldots,w^d \in W} \left[ \e{T(u^1+w^1,\ldots,u^d+w^d)} \right].
$$
Consider any fixing of $w^1,\ldots,w^d \in W$. Then
$$
T(u^1+w^1,\ldots,u^d+w^d) = \sum_{I \subseteq [d]} T_I (u^I, w^{I^c})
$$
where $T_I(u^I,w^{I^c})=T(z_1,\ldots,z_d)$ where $z_i = u_i$ if $i \in I$ and $z_i=w_i$ if $i \notin I$.
\Cref{lemma:bias} gives that
$$
\left| \E_{u^1,\ldots,u^d \in U} \left[ \e{T(u^1+w^1,\ldots,u^d+w^d)} \right] \right| \le
\E_{u^1,\ldots,u^d \in U} \left[ \e{T(u^1,\ldots,u^d)} \right]  = \bias(T|_U).
$$
The claim follows by averaging over $w^1,\ldots,w^d \in W$.
\end{proof}

Finally, we show that if a tensor contains an independent set $A$ then its analytic rank is at least linear in $|A|$.

\begin{claim}
Let $T:(\F^n)^d \to \F$ be an order-$d$ tensor.
Assume that $A \subseteq [n]$ is an independent set in $T$. Then $\arank(T) \ge c |A|$ where
$c=c(d,|\F|)$ satisfies
$c \ge 2^{-d}$ and $c \ge 1-\frac{\log(d-1)}{\log |\F|}$.
\end{claim}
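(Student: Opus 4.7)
The plan is to reduce $T$ to a transparent ``diagonal'' tensor using \Cref{claim:restrict}, compute the bias of that diagonal tensor exactly, and then derive the two lower bounds on the resulting analytic rank.

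First, I would apply \Cref{claim:restrict} to the coordinate subspace $U = \F^A \subseteq \F^n$ spanned by the standard basis vectors indexed by $A$. Because $A$ is an independent set, only the diagonal entries $c_i := T_{i,i,\ldots,i} \neq 0$ for $i \in A$ survive in the restriction, and
\[
T|_U(x^1,\ldots,x^d) \;=\; \sum_{i \in A} c_i\, x^1_i\, x^2_i \cdots x^d_i.
\]
Since $\arank(T) \ge \arank(T|_U)$, it suffices to lower bound $\arank(T|_U)$.

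Next I would exploit the fact that the $|A|$ rank-one summands depend on pairwise disjoint sets of scalar variables: the $i$-th summand only involves the column $x^1_i,\ldots,x^d_i$. Hence the characters factor under the expectation and the bias becomes a product of $|A|$ identical expectations. After the substitution $y^1 \mapsto y^1/c_i$ (valid since $c_i \neq 0$), each factor equals the universal quantity
\[
B_d \;:=\; \E_{y^1,\ldots,y^d \in \F}\, \e{y^1 y^2 \cdots y^d},
\]
so $\bias(T|_U) = B_d^{|A|}$. Applying \Cref{eq:bias} to the linear form $y^1 \mapsto y^1 \cdot (y^2 \cdots y^d)$ immediately gives
\[
B_d \;=\; \Pr_{y^2,\ldots,y^d}\bigl[y^2 \cdots y^d = 0\bigr] \;=\; 1 - (1 - 1/q)^{d-1},
\]
where $q := |\F|$. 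Therefore $\arank(T|_U) = |A| \cdot (-\log_q B_d)$, and the problem reduces to lower bounding $c := -\log_q B_d$ by both $2^{-d}$ and $1 - \log(d-1)/\log q$.

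The second bound is easy: Bernoulli's inequality gives $(1-1/q)^{d-1} \ge 1 - (d-1)/q$, hence $B_d \le (d-1)/q$ and $c \ge 1 - \log_q(d-1)$. The bound $c \ge 2^{-d}$ is more delicate, since neither of the two natural estimates is tight throughout the parameter range. On one hand, combining $-\log_q(1-x) \ge x/\ln q$ with the trivial lower bound $(1-1/q)^{d-1} \ge 2^{1-d}$ yields $c \ge 2^{1-d}/\ln q$, which dominates $2^{-d}$ only when $q \le e^2$. On the other hand, Bernoulli gives $c \ge 2^{-d}$ only when $q \ge (d-1)^{1/(1-2^{-d})}$. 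The main obstacle is therefore to close the case analysis at the threshold $q^{1-2^{-d}} = d-1$: above the threshold, Bernoulli alone suffices; below the threshold $q$ is at most roughly $d$, and I would use the sharper formulation $c \ge (1-1/q)^{d-1}/\ln q$, which reduces the claim to the elementary inequality $2^d(1-1/q)^{d-1} \ge \ln q$. This inequality is verifiable directly in the small-$q$ regime, since the left-hand side equals $2 \cdot (2 - 2/q)^{d-1}$ and is non-decreasing in both $q$ and $d$ for $q \ge 2$, while the right-hand side is only logarithmic in $q$. The edge case $d = 1$ is trivial because $B_1 = 0$.
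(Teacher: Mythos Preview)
Your reduction to the diagonal tensor via \Cref{claim:restrict}, the bias computation $\bias(T|_U) = \bigl(1-(1-1/q)^{d-1}\bigr)^{|A|}$, and the Bernoulli argument for $c \ge 1-\log_q(d-1)$ are exactly what the paper does.

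The only divergence is in the bound $c \ge 2^{-d}$. Your threshold-based case split (Bernoulli above, the inequality $2^d(1-1/q)^{d-1}\ge \ln q$ below) can be made to work, but is more laborious than necessary and the below-threshold step is only sketched (``non-decreasing in both $q$ and $d$'' versus ``logarithmic in $q$'' does not by itself close the two-parameter inequality). The paper instead argues in one line that $c(d,q)=-\log_q\bigl(1-(1-1/q)^{d-1}\bigr)$ is minimized over $q\ge 2$ at $q=2$ (a monotonicity/convexity argument), and then simply computes $c(d,2)=-\log_2(1-2^{1-d})\ge 2^{1-d}$ via $-\log_2(1-x)\ge x$. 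This is cleaner and in fact yields the stronger constant $2^{1-d}$ rather than $2^{-d}$.
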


\begin{proof}
Let $S:(\F^A)^d \to \F$ be the restriction of $T$ to $\F^A$. By \Cref{claim:restrict} we have $\arank(T) \ge \arank(S)$. We have
$$
\bias(S) = \E_{x^1,\ldots,x^d \in \F^A} \left[ \e{\sum_{i \in A} c_i \prod_{j=1}^d x^j_i} \right]
$$
where $c_i \ne 0$ for $i \in A$. \Cref{eq:bias} gives that
$$
\bias(S) = \Pr_{x^2,\ldots,x^d \in \F^A}\left[\prod_{j=2}^d x^j_i=0 \quad \forall i \in A \right].
$$
A simple calculation then gives
$$
\bias(S) = \left(1 - \left(1-\frac{1}{|\F|}\right)^{d-1} \right)^{|A|}.
$$
Define $c(d,y) = -\log_{y} \left(1 - \left(1-\frac{1}{y}\right)^{d-1}\right)$ so that $\arank(S) = c(d,|\F|) \cdot |A|$.

A convexity argument shows that for $y \ge 2$, $c(d,y) \ge c(d,2) = -\log_2 (1-2^{-(d-1)}) \ge 2^{-(d-1)}$. Next, if
we assume $y \ge d$ (otherwise the second bound on $c$ is trivial)
then $c(d,y) \ge -\log_{y}  \left( \frac{d-1}{y} \right) = 1 - \frac{\log(d-1)}{\log y}$.
\end{proof}

\bibliographystyle{amsplain}

\begin{thebibliography}{10}

\bibitem{alon2013sunflowers}
Noga Alon, Amir Shpilka, and Christopher Umans, \emph{On sunflowers and matrix
  multiplication}, Computational Complexity \textbf{22} (2013), no.~2,
  219--243.

\bibitem{bateman2012new}
Michael Bateman and Nets Katz, \emph{New bounds on cap sets}, Journal of the
  American Mathematical Society \textbf{25} (2012), no.~2, 585--613.

\bibitem{bhowmick2015bias}
Abhishek Bhowmick and Shachar Lovett, \emph{Bias vs structure of polynomials in
  large fields, and applications in effective algebraic geometry and coding
  theory}, arXiv preprint arXiv:1506.02047 (2015).

\bibitem{bhowmick2018list}
\bysame, \emph{The list decoding radius for Reed-Muller codes over small
  fields}, IEEE Transactions on Information Theory \textbf{64} (2018), no.~6,
  4382--4391.

\bibitem{bhrushundi2018multilinear}
Abhishek Bhrushundi, Prahladh Harsha, Pooya Hatami, Swastik Kopparty, and
  Mrinal Kumar, \emph{On multilinear forms: Bias, correlation, and tensor
  rank}, arXiv preprint arXiv:1804.09124 (2018).

\bibitem{gs005}
Markus Bl{\"a}ser, \emph{Fast matrix multiplication}, Graduate Surveys, no.~5,
  Theory of Computing Library, 2013.

\bibitem{blasiak2017cap}
Jonah Blasiak, Thomas Church, Henry Cohn, Joshua~A Grochow, Eric Naslund,
  William~F Sawin, and Chris Umans, \emph{On cap sets and the group-theoretic
  approach to matrix multiplication}, Discrete Analysis \textbf{2017} (2017),
  no.~3, 1--27.

\bibitem{chillara2016chasm}
Suryajith Chillara, Mrinal Kumar, Ramprasad Saptharishi, and V~Vinay, \emph{The
  chasm at depth four, and tensor rank: Old results, new insights}, arXiv
  preprint arXiv:1606.04200 (2016).

\bibitem{croot2017progression}
Ernie Croot, Vsevolod~F Lev, and P{\'e}ter~P{\'a}l Pach, \emph{Progression-free
  sets in $\mathbb{Z}_4^n$ are exponentially small}, Annals of Mathematics
  \textbf{185} (2017), no.~1, 331--337.

\bibitem{ellenberg2017large}
Jordan~S Ellenberg and Dion Gijswijt, \emph{On large subsets of
  $\mathbb{F}_q^n$ with no three-term arithmetic progression}, Annals of
  Mathematics \textbf{185} (2017), no.~1, 339--343.

\bibitem{erdos1960intersection}
Paul Erd{\"o}s and Richard Rado, \emph{Intersection theorems for systems of
  sets}, Journal of the London Mathematical Society \textbf{1} (1960), no.~1,
  85--90.

\bibitem{gowers2011linear-a}
William~T Gowers and Julia Wolf, \emph{Linear forms and higher-degree
  uniformity for functions on $\mathbb{F}_p^n$}, Geometric and Functional
  Analysis \textbf{21} (2011), no.~1, 36--69.

\bibitem{gowers2011linear-b}
\bysame, \emph{Linear forms and quadratic uniformity for
  functions on $\mathbb{Z}_n$}, Journal d'analyse math{\'e}matique \textbf{115}
  (2011), no.~1, 121--186.

\bibitem{green2009distribution}
Ben Green and Terence Tao, \emph{The distribution of polynomials over finite
  fields, with applications to the {G}owers norms}, Contributions to Discrete
  Mathematics \textbf{4} (2009), no.~2.

\bibitem{green2009new}
\bysame, \emph{New bounds for szemer{\'e}di's theorem, i: Progressions of
  length 4 in finite field geometries}, Proceedings of the London Mathematical
  Society \textbf{98} (2009), no.~2, 365--392.

\bibitem{haramaty2010structure}
Elad Haramaty and Amir Shpilka, \emph{On the structure of cubic and quartic
  polynomials}, Proceedings of the forty-second ACM symposium on Theory of
  computing, ACM, 2010, pp.~331--340.

\bibitem{janzer2019polynomial}
Oliver Janzer, \emph{Polynomial bound for the partition rank vs the analytic
  rank of tensors}, arXiv preprint arXiv:1902.11207 (2019).

\bibitem{kaufman2008worst}
Tali Kaufman and Shachar Lovett, \emph{Worst case to average case reductions
  for polynomials}, 2008 49th Annual IEEE Symposium on Foundations of Computer
  Science, IEEE, 2008, pp.~166--175.

\bibitem{kazhdan2018approximate}
David Kazhdan and Tamar Ziegler, \emph{Approximate cohomology}, Selecta
  Mathematica \textbf{24} (2018), no.~1, 499--509.

\bibitem{kleinberg2016nearly}
Robert Kleinberg, \emph{A nearly tight upper bound on tri-colored sum-free sets
  in characteristic $2$}, arXiv preprint arXiv:1605.08416 (2016).

\bibitem{kleinberg2016growth}
Robert Kleinberg, William~F Sawin, and David~E Speyer, \emph{The growth rate of
  tri-colored sum-free sets}, arXiv preprint arXiv:1607.00047 (2016).

\bibitem{milicevic2019polynomial}
Luka Mili{\'c}evi{\'c}, \emph{Polynomial bound for partition rank in terms of
  analytic rank}, arXiv preprint arXiv:1902.09830 (2019).

\bibitem{naslund2017partition}
Eric Naslund, \emph{The partition rank of a tensor and $k$-right corners in
  $\mathbb{F}_q^n$}, arXiv preprint arXiv:1701.04475 (2017).

\bibitem{naslund2017upper}
Eric Naslund and Will Sawin, \emph{Upper bounds for sunflower-free sets}, Forum
  of Mathematics, Sigma, vol.~5, Cambridge University Press, 2017.

\bibitem{tao2016symmetric}
Terence Tao, \emph{A symmetric formulation of the
  {Croot--Lev--Pach--Ellenberg--Gijswijt} capset bound (blog post), may 2016},
  2016.

\end{thebibliography}
\providecommand{\bysame}{\leavevmode\hbox to3em{\hrulefill}\thinspace}
\providecommand{\MR}{\relax\ifhmode\unskip\space\fi MR }
\providecommand{\MRhref}[2]{%
  \href{http://www.ams.org/mathscinet-getitem?mr=#1}{#2}
}
\providecommand{\href}[2]{#2}


\begin{dajauthors}
\begin{authorinfo}[sl]
  Shachar Lovett\\
  Computer Science and Engineering\\
  University of California, San Diego\\
  slovett\imageat{}ucsd\imagedot{}edu\\
  \url{http://cseweb.ucsd.edu/~slovett}
\end{authorinfo}
\end{dajauthors}

\end{document}